\newtheorem{theorem}{Theorem}[section]
\newtheorem{lemma}[theorem]{Lemma}
\newtheorem{proposition}[theorem]{Proposition}
\theoremstyle{definition}
\theoremstyle{remark}
\newtheorem{remark}[theorem]{Remark}
\numberwithin{equation}{section}
\newcommand{\ee}{\hskip0.15ex}
\newcommand{\dd}[1]{_{\raise-0.3ex\hbox{$\scriptstyle #1$}}}
\newcommand {\Norm}[2]{ \mathchoice 
    {|\ee #1\ee|\dd{#2}}
    {| #1 |_{#2}}
    {| #1 |_{#2}}
    {| #1 |_{#2}} }
\newcommand {\DNorm}[2]{ \mathchoice 
    {\|\ee #1\ee\|\dd{#2}}
    {\| #1 \|_{#2}}
    {\| #1 \|_{#2}}
    {\| #1 \|_{#2}} }
\newcommand {\Normc}[2]{ \mathchoice 
    {|\ee #1\ee|\dd{#2}^2}
    {| #1 |_{#2}^2}
    {| #1 |_{#2}^2}
    {| #1 |_{#2}^2} }
\newcommand {\DNormc}[2]{ \mathchoice 
    {\|\ee #1\ee\|\dd{#2}^2}
    {\| #1 \|_{#2}^2}
    {\| #1 \|_{#2}^2}
    {\| #1 \|_{#2}^2} }
\renewcommand{\div}{\operatorname{\rm div}}
\newcommand{\grad}{\operatorname{\textbf{grad}}}
\newcommand{\curl}{\operatorname{\textbf{curl}}}
\newcommand{\Cinf}{\mathscr{C}^\infty}
\newcommand\R{{\mathbb R}}
\begin{document}

\title[Inequalities of Babu\v{s}ka-Aziz and Friedrichs-Velte for differential forms]{Inequalities of Babu\v{s}ka-Aziz\\ and Friedrichs-Velte for differential forms}  

\author{Martin Costabel}

\address{IRMAR UMR 6625 du CNRS, Universit\'{e} de Rennes 1\\
Campus de Beaulieu\\
35042 Rennes Cedex, France}

\email{martin.costabel@univ-rennes1.fr}

\date{19 June 2015}  

\keywords{inf-sup constant, de Rham complex, conjugate harmonic differential forms}

\subjclass{30A10, 35Q35}

\begin{abstract}
For sufficiently smooth bounded plane domains, 
the equivalence between the inequalities of Babu\v{s}ka--Aziz for right inverses of the divergence and of Friedrichs on conjugate harmonic functions was shown by Horgan and Payne in 1983 \cite{HorganPayne1983}. In a previous paper \cite{CoDa_IBAFHP} we proved that this equivalence, and the equality between the associated constants, is true without any regularity condition on the domain. In three dimensions, Velte \cite{Velte98} studied a generalization of the notion of conjugate harmonic functions and corresponding generalizations of Friedrich's inequality, and he showed for sufficiently smooth simply-connected domains the equivalence with inf-sup conditions for the divergence and for the curl. For this equivalence, Zsupp\'an \cite{Zsuppan2013} observed that our proof can be adapted, proving the equality between the corresponding constants without regularity assumptions on the domain. 
Here we formulate a generalization of the Friedrichs inequality for conjugate harmonic differential forms on bounded open sets in any dimension that contains the situations studied by Horgan--Payne and Velte as special cases. We also formulate the corresponding inf-sup conditions or Babu\v{s}ka--Aziz inequalities and prove their equivalence with the Friedrichs inequalities, including equality between the corresponding constants. No a-priori conditions on the regularity of the open set nor on its topology are assumed.

\end{abstract}

\maketitle


\section{The inequalities}\label{S:Ineq}

\subsection{Notation}
Let $\Omega$ be a bounded open set in $\R^{n}$, $n\ge2$. We do not assume any regularity of the boundary nor any hypothesis on the topology of $\Omega$, not even connectedness.
By $\Lambda^{\ell}$, $0\le\ell\le n$, we denote the exterior algebra of $\R^{n}$, and we write $|\cdot|$ for the natural Euclidean norm on each space $\Lambda^{\ell}$. 
Differential forms of order $\ell$ with square integrable coefficients are denoted by 
$L^{2}(\Omega,\Lambda^{\ell})$, with norm
$$
  \|u\| = \Big( \int_{\Omega} |u(x)|^{2} \,dx \Big)^{\frac12} 
$$
and corresponding scalar product $\langle\cdot,\cdot\rangle$.
Similarly, we write
$\Cinf_{0}(\Omega,\Lambda^{\ell})$ for the space of smooth differential forms with compact support in $\Omega$. By
$$
  d: \Cinf_{0}(\Omega,\Lambda^{\ell}) \,\to\, \Cinf_{0}(\Omega,\Lambda^{\ell+1})
  \qquad\mbox{ and }\quad
  d^{*}: \Cinf_{0}(\Omega,\Lambda^{\ell}) \,\to\, \Cinf_{0}(\Omega,\Lambda^{\ell-1})
$$
we denote the exterior derivative $d$ and the coderivative $d^{*}$ which is the formal adjoint of $d$ with respect to the $L^{2}$ scalar product. For $\ell=n$, we set $d=0$ and for $\ell=0$, we set $d^{*}=0$. 

On $\Cinf_{0}(\Omega,\Lambda^{\ell})$, we consider the $H^{1}$ semi-norm defined by the (Hodge-)Laplacian:
$$
  \Norm{u}{1}= \langle\Delta u,u\rangle^{\frac12}\,.
$$ 
From the formula
$$
  \Delta = d^{*}d + dd^{*}
$$
we get
\begin{equation}
\label{E:H1semi}
 |u|_{1}^{2} = \|du\|^{2} + \|d^{*}u\|^{2}\;,
\end{equation}
valid for differential forms of any order.

In the following we will fix the order $\ell\in\{1,\dots,n-1\}$.

Let $H^{1}_{0}(\Omega,\Lambda^{\ell})$ be the completion of 
$\Cinf_{0}(\Omega,\Lambda^{\ell})$ under the $H^{1}$ semi-norm.
We then have the following continuous extensions of $d$ and $d^{*}$, still denoted by the same symbols:
\begin{equation}
\label{E:dinH10}
  d: H^{1}_{0}(\Omega,\Lambda^{\ell}) \,\to\, L^{2}(\Omega,\Lambda^{\ell+1})
  \qquad\mbox{ and }\quad
  d^{*}: L^{2}(\Omega,\Lambda^{\ell+1}) \,\to\,H^{-1}(\Omega,\Lambda^{\ell})
\end{equation} 
where $H^{-1}(\Omega,\Lambda^{\ell})$ is the dual space of $H^{1}_{0}(\Omega,\Lambda^{\ell})$.

In addition to the extensions \eqref{E:dinH10}, we shall also need the following extensions of $d$ and $d^{*}$:
\begin{equation}
\label{E:dinL2}
  d^{*}: H^{1}_{0}(\Omega,\Lambda^{\ell}) \,\to\, L^{2}(\Omega,\Lambda^{\ell-1})
  \qquad\mbox{ and }\quad
  d: L^{2}(\Omega,\Lambda^{\ell-1}) \,\to\,H^{-1}(\Omega,\Lambda^{\ell}).
\end{equation} 
If we need to disambiguate these extensions, we write $\underline d$ and $\underline d^{*}$ for the operators in \eqref{E:dinH10} and $\overline d$ and $\overline d^{*}$ for the operators in \eqref{E:dinL2}.

Finally, we define the closed subspace $M$ of $L^{2}(\Omega,\Lambda^{\ell-1})$ as the orthogonal complement of the kernel of $d$ in \eqref{E:dinL2}:
\begin{equation}
\label{E:defM}
  M = \{u\in L^{2}(\Omega,\Lambda^{\ell-1}) \mid 
  \forall\,v\in L^{2}(\Omega,\Lambda^{\ell-1}): dv=0 \Rightarrow \langle u,v\rangle = 0 \}\,.
\end{equation}
In the general situation, we immediately get the following result, which is a special case of the well-known fact that the kernel of a bounded operator between Hilbert spaces is the orthogonal complement of the range of the adjoint operator.
\begin{lemma}
\label{L:imd*denseinM}
The operator $d^{*}$ in \eqref{E:dinL2} maps  $H^{1}_{0}(\Omega,\Lambda^{\ell})$ to a dense subspace of $M$.
\end{lemma}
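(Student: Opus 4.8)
The plan is to read the statement as a direct instance of the Hilbert-space identity
\[
  \overline{\operatorname{ran} T} = (\ker T^{\star})^{\perp},
\]
applied to the operator $T = \overline d^{*}\colon H^{1}_{0}(\Omega,\Lambda^{\ell}) \to L^{2}(\Omega,\Lambda^{\ell-1})$ from \eqref{E:dinL2}. Both spaces are genuine Hilbert spaces: the semi-norm $|\cdot|_{1}$ is actually a norm on $\Cinf_{0}(\Omega,\Lambda^{\ell})$, since by \eqref{E:H1semi} the vanishing of $|u|_{1}$ forces $du=d^{*}u=0$, hence $\Delta u=0$, so that a compactly supported $u$ vanishes; thus the completion $H^{1}_{0}(\Omega,\Lambda^{\ell})$ carries the scalar product $(\cdot,\cdot)_{1}$ obtained by polarizing $|u|_{1}^{2}=\|du\|^{2}+\|d^{*}u\|^{2}$. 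Writing $T^{\star}\colon L^{2}(\Omega,\Lambda^{\ell-1}) \to H^{1}_{0}(\Omega,\Lambda^{\ell})$ for the Hilbert adjoint of $T$, the identity above reads
\[
  \overline{\overline d^{*}\bigl(H^{1}_{0}(\Omega,\Lambda^{\ell})\bigr)}
  = \overline{\operatorname{ran} T} = (\ker T^{\star})^{\perp},
\]
so the whole matter reduces to identifying $\ker T^{\star}$ with $\ker\overline d$, since $M=(\ker\overline d)^{\perp}$ by \eqref{E:defM}.

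To carry out this identification I would unwind the definition of $\overline d$. By construction the two operators in \eqref{E:dinL2} are adjoint through the duality pairing between $H^{-1}(\Omega,\Lambda^{\ell})$ and $H^{1}_{0}(\Omega,\Lambda^{\ell})$: for $w\in L^{2}(\Omega,\Lambda^{\ell-1})$ and $\phi\in H^{1}_{0}(\Omega,\Lambda^{\ell})$ one has $\langle \overline d w,\phi\rangle_{H^{-1},H^{1}_{0}} = \langle w,\overline d^{*}\phi\rangle$, which is just the integration-by-parts relation on the dense subspace $\Cinf_{0}$ extended by continuity. On the other hand the Hilbert adjoint satisfies $(T^{\star}w,\phi)_{1} = \langle w,\overline d^{*}\phi\rangle$ for all $\phi$. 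Comparing the two shows that $T^{\star}w$ is exactly the Riesz representative in $H^{1}_{0}(\Omega,\Lambda^{\ell})$ of the functional $\overline d w\in H^{-1}(\Omega,\Lambda^{\ell})$. Since the Riesz isomorphism $H^{-1}(\Omega,\Lambda^{\ell})\to H^{1}_{0}(\Omega,\Lambda^{\ell})$ is injective, $T^{\star}w=0$ if and only if $\overline d w=0$, that is $\ker T^{\star}=\ker\overline d$. Substituting this into the displayed identity gives $\overline{\overline d^{*}(H^{1}_{0}(\Omega,\Lambda^{\ell}))}=(\ker\overline d)^{\perp}=M$, which is the assertion.

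The main obstacle is bookkeeping rather than analysis: one must keep straight three distinct adjoint relations — the formal $L^{2}$ adjointness of $d$ and $d^{*}$ on $\Cinf_{0}$, the duality adjointness defining $\overline d$ from $\overline d^{*}$ in \eqref{E:dinL2}, and the true Hilbert adjoint $T^{\star}$ — and check that composing the last with the Riesz map recovers $\overline d$ on the nose. Once $\ker T^{\star}=\ker\overline d$ is in hand the conclusion is immediate; indeed, as the remark preceding the statement signals, the lemma is a direct transcription of the orthogonality between the kernel of a bounded operator and the closed range of its adjoint.
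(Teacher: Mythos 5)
Your proposal is correct and follows exactly the route the paper takes: the paper states Lemma~\ref{L:imd*denseinM} as an immediate instance of the fact that the kernel of a bounded Hilbert-space operator is the orthogonal complement of the closure of the range of its adjoint, which is precisely the identity $\overline{\operatorname{ran} T}=(\ker T^{\star})^{\perp}$ you invoke for $T=\overline d^{*}$. Your additional bookkeeping --- checking that $|\cdot|_{1}$ is a genuine norm on $\Cinf_{0}(\Omega,\Lambda^{\ell})$ and that $\ker T^{\star}=\ker\overline d$ via the Riesz map, so that $(\ker T^{\star})^{\perp}=M$ by \eqref{E:defM} --- is exactly the verification the paper leaves implicit.
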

The question to be studied is whether the image of $d^{*}$ coincides with $M$. This is equivalent to the validity of the inf-sup condition or generalized Babu\v{s}ka--Aziz inequality that we will consider in the next subsection.

Let us note a consequence of the property $d^{*}\circ d^{*}=0$ of the de Rham complex and a corollary of Lemma~\ref{L:imd*denseinM}:
\begin{equation}
\label{E:Minkerd*}
 u\in M \;\Longrightarrow\; d^{*}u=0\,.
\end{equation}

\subsection{The Babu\v{s}ka--Aziz inequality}
The \emph{Babu\v{s}ka--Aziz constant} (of order $\ell$) $C_{\ell}(\Omega)$ of the open set $\Omega$ is defined as the smallest constant $C$ such that the following is true
\begin{equation}
\label{E:BA}
\text{
For any $q\in M$ there exists $v\in H^{1}_{0}(\Omega,\Lambda^{\ell})$ such that 
$d^{*}v=q$ and 
$|v|_{1}^{2}\le C \|q\|^{2}$.}
\end{equation}
If $d^{*}: H^{1}_{0}(\Omega,\Lambda^{\ell}) \,\to\,M$ is not surjective, we set 
$C_{\ell}(\Omega)=\infty$. Otherwise we say that the \emph{Babu\v{s}ka--Aziz inequality} holds.
Thus $C_{\ell}(\Omega)$ is finite if and only if $d^{*}$ is surjective onto $M$. By duality, this is the case if and only if $d$ is injective on $M$ and has closed range, that is, if and only if an a-priori estimate
\begin{equation}
\label{E:dLions}
 \forall\, q\in M\;:\;\|q\|^{2}\le C_{\ell}(\Omega) \, |dq|_{-1}^{2}
\end{equation}
is satisfied. Here $\Norm{\cdot}{-1}$ denotes the dual norm
$$
  \Norm{dq}{-1} = \sup_{v\in H^{1}_{0}(\Omega,\Lambda^{\ell})} 
     \frac{\langle dq,v\rangle}{\Norm{v}{1}} \,.
$$
Inserting this into \eqref{E:dLions}, we obtain the equivalent \emph{inf-sup condition}
\begin{equation}
\label{E:infsup}
 \inf_{q\in M} \sup_{v\in H^{1}_{0}(\Omega,\Lambda^{\ell})} 
     \frac{\langle q,d^{*}v\rangle}{\Norm{v}{1}} = \beta>0
     \quad\mbox{ with }\;
     \beta=C_{\ell}(\Omega)^{-\frac12} \,.
\end{equation}

\subsection{The Friedrichs--Velte inequality}
Generalizing the notion of conjugate harmonic functions, we consider pairs of differential $\ell-1$ forms $h\in L^{2}(\Omega,\Lambda^{\ell-1})$ and $\ell+1$ forms
$g\in L^{2}(\Omega,\Lambda^{\ell+1})$ satisfying 
\begin{equation}
\label{E:dh=d*g}
 dh = d^{*}g\,.
\end{equation}
Note that here $d=\overline d$ as defined in \eqref{E:dinL2} and $d^{*}=\underline d^{*}$ as defined in \eqref{E:dinH10}. 
We say that the \emph{Friedrichs--Velte inequality} holds if there is a constant $\Gamma$ such that
\begin{equation}
\label{E:FrV}
 \forall\,  h\in M, \, g\in L^{2}(\Omega,\Lambda^{\ell+1})
 \mbox{ satisfying \eqref{E:dh=d*g} : }\quad 
 \|h\|^{2} \le\,\Gamma\,\|g\|^{2}\,.
\end{equation}
The smallest possible constant $\Gamma$ will be denoted by $\Gamma_{\ell}(\Omega)$. 

\begin{remark}
Note that in the Friedrichs--Velte inequality \eqref{E:FrV} we assume that $h\in M$, which implies $d^{*}h=0$. In addition, we can assume without loss of generality that $g\in M^{*}$, where $M^{*}$ is defined in analogy to $M$ as the orthogonal complement in $L^{2}(\Omega,\Lambda^{\ell+1})$ of the kernel of $d^{*}$. The reason is that for fixed $h$, the element $g
\in L^{2}(\Omega,\Lambda^{\ell+1})$ with minimal $L^{2}$ norm that satisfies $d^{*}g=dh$ belongs to $M^{*}$. Thus $g$ can be assumed to satisfy $dg=0$. Both $h$ and $g$ are then harmonic, that is they belong to the kernel of the Hodge Laplacian $\Delta = d^{*}d + dd^{*}$.
The complete system for $h\in L^{2}(\Omega,\Lambda^{\ell-1})$ and $g\in L^{2}(\Omega,\Lambda^{\ell+1})$
$$
  d^{*}h=0\,,\quad dh = d^{*}g\,,\quad dg=0
$$
can be written in Clifford analysis notation 
$(d+d^{*})(g-h)=0$ and expresses the fact that the element $g-h$ is \emph{monogenic}. It is sometimes called generalized Moisil-Teodorescu system and has been studied as generalization of the definition of conjugate harmonic functions, see \cite{BrackxDelangheSommen2002}.
\end{remark}

\section{Equivalence between Babu\v{s}ka--Aziz and Friedrichs--Velte} \label{S:BA-FrV}

\begin{theorem}
\label{T:BA-FrV}
For any bounded open set $\Omega\subset\R^{n}$ and any $1\le\ell\le n-1$, the Babu\v{s}ka--Aziz constant $C_{\ell}(\Omega)$ is finite if and only if the Friedrichs--Velte constant $\Gamma_{\ell}(\Omega)$ is finite, and there holds
\begin{equation}
\label{E:CGamma}
 C_{\ell}(\Omega) = \Gamma_{\ell}(\Omega) + 1\,.
\end{equation}
\end{theorem}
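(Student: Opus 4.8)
The plan is to establish the equality \eqref{E:CGamma} by proving the two inequalities $\Gamma_{\ell}(\Omega)\le C_{\ell}(\Omega)-1$ and $C_{\ell}(\Omega)\le\Gamma_{\ell}(\Omega)+1$ separately, each by a short computation that ends in a Cauchy--Schwarz step. The two arguments are dual to one another: one passes from an efficient right inverse of $\overline d^{*}$ (a minimizer in \eqref{E:BA}) to a conjugate pair \eqref{E:dh=d*g}, and the other runs in reverse. Throughout I will keep track of the distinct extensions, so that every duality pairing $\langle\,\cdot\,,\,\cdot\,\rangle_{H^{-1},H^{1}_{0}}$ is read off from the correct adjoint relation between $\underline d,\underline d^{*}$ and $\overline d,\overline d^{*}$. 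A useful preliminary identity, used on both sides, is that \eqref{E:H1semi} gives $|v|_{1}^{2}=\|\underline d v\|^{2}+\|\overline d^{*}v\|^{2}$ for $v\in H^{1}_{0}(\Omega,\Lambda^{\ell})$.

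For the inequality $\Gamma_{\ell}(\Omega)\le C_{\ell}(\Omega)-1$, I assume $C_{\ell}(\Omega)<\infty$ and take any conjugate pair $h\in M$, $g\in L^{2}(\Omega,\Lambda^{\ell+1})$ satisfying \eqref{E:dh=d*g}. Since $\overline d^{*}$ is surjective onto $M$, the Babu\v{s}ka--Aziz inequality \eqref{E:BA} supplies $v\in H^{1}_{0}(\Omega,\Lambda^{\ell})$ with $\overline d^{*}v=h$ and $|v|_{1}^{2}\le C_{\ell}(\Omega)\|h\|^{2}$, whence $\|\underline d v\|^{2}=|v|_{1}^{2}-\|h\|^{2}\le(C_{\ell}(\Omega)-1)\|h\|^{2}$. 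Using the adjoint relations and then the conjugacy relation \eqref{E:dh=d*g}, I rewrite
$$
 \|h\|^{2}=\langle h,\overline d^{*}v\rangle=\langle \overline d h,v\rangle=\langle \underline d^{*}g,v\rangle=\langle g,\underline d v\rangle\le\|g\|\,\|\underline d v\|\le \|g\|\,(C_{\ell}(\Omega)-1)^{1/2}\|h\|\,,
$$
which yields $\|h\|^{2}\le(C_{\ell}(\Omega)-1)\|g\|^{2}$ and hence $\Gamma_{\ell}(\Omega)\le C_{\ell}(\Omega)-1$.

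For the reverse inequality $C_{\ell}(\Omega)\le\Gamma_{\ell}(\Omega)+1$, I assume $\Gamma_{\ell}(\Omega)<\infty$ and verify the a-priori estimate \eqref{E:dLions} with constant $\Gamma_{\ell}(\Omega)+1$; by the equivalences recorded after \eqref{E:dLions}--\eqref{E:infsup} this forces $C_{\ell}(\Omega)$ finite and $\le\Gamma_{\ell}(\Omega)+1$. The key construction is to let $v\in H^{1}_{0}(\Omega,\Lambda^{\ell})$ be the Riesz representative, for the inner product inducing $|\cdot|_{1}$, of the functional $w\mapsto\langle q,\overline d^{*}w\rangle=\langle \overline d q,w\rangle$, and to set $g:=\underline d v$ and $p:=\overline d^{*}v$. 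Reading off the Euler equation gives $\underline d^{*}g=\overline d(q-p)$, so that $h:=q-p$ and $g$ satisfy \eqref{E:dh=d*g}; moreover $p\in M$, since $\langle p,z\rangle=\langle \overline d^{*}v,z\rangle=\langle v,\overline d z\rangle=0$ whenever $\overline d z=0$, and therefore $h=q-p\in M$ as required for \eqref{E:FrV}. One records $|\overline d q|_{-1}^{2}=|v|_{1}^{2}=\|g\|^{2}+\|p\|^{2}=\langle q,p\rangle$. Applying the Friedrichs--Velte inequality \eqref{E:FrV} as $\|h\|\le\Gamma_{\ell}(\Omega)^{1/2}\|g\|$, writing $q=h+p$, and using Cauchy--Schwarz in the form $(\Gamma_{\ell}(\Omega)^{1/2}\|g\|+\|p\|)^{2}\le(\Gamma_{\ell}(\Omega)+1)(\|g\|^{2}+\|p\|^{2})$ gives $\|q\|^{2}\le(\Gamma_{\ell}(\Omega)+1)\,|dq|_{-1}^{2}$, which is exactly \eqref{E:dLions}.

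Combining the two inequalities yields both the finiteness equivalence and the equality $C_{\ell}(\Omega)=\Gamma_{\ell}(\Omega)+1$. I expect the main obstacle to lie in the second direction: one must check that the Riesz representative genuinely produces an admissible conjugate pair, namely that $h=q-\overline d^{*}v$ lands in $M$ (not merely in $L^{2}(\Omega,\Lambda^{\ell-1})$) and that the norm identities $|v|_{1}^{2}=\|g\|^{2}+\|p\|^{2}=\langle q,p\rangle=|dq|_{-1}^{2}$ hold exactly, since it is precisely the tightness of the single Cauchy--Schwarz step that pins the constant to $\Gamma_{\ell}(\Omega)+1$ rather than something larger. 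Care with the four extensions $\underline d,\underline d^{*},\overline d,\overline d^{*}$ in the dual pairings is the only other place where an error could creep in.
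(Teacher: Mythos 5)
Your proof is correct and takes essentially the same route as the paper: the first direction is identical, and in the second direction your Riesz representative $v$ of $w\mapsto\langle q,\overline d^{*}w\rangle$ is exactly the paper's variational solution \eqref{E:u=w(p)}, with the same conjugate pair $h=q-\overline d^{*}v$, $g=\underline d v$ and the same identities $|v|_{1}^{2}=\|g\|^{2}+\|p\|^{2}=\langle q,p\rangle=|\overline d q|_{-1}^{2}$. The only (cosmetic) difference is the concluding algebra: you use the triangle inequality and the bound $\bigl(\sqrt{\Gamma_{\ell}(\Omega)}\,\|g\|+\|p\|\bigr)^{2}\le(\Gamma_{\ell}(\Omega)+1)\bigl(\|g\|^{2}+\|p\|^{2}\bigr)$, whereas the paper expands $\|q\|^{2}$ exactly and invokes the auxiliary inequality \eqref{E:Fri2}; both yield the sharp constant $\Gamma_{\ell}(\Omega)+1$.
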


\begin{proof}
The proof is divided into two parts.

\medskip\noindent
{\em (i)} In a first step, we assume that $\Omega$ is such that $C_{\ell}(\Omega)$ is finite. We will show that then $\Gamma_{\ell}(\Omega)$ is finite and
\begin{equation}
\label{E:Gamma<=C-1}
   \Gamma_{\ell}(\Omega) \le C_{\ell}(\Omega)-1.
\end{equation}

Let $h\in M$ and $g\in L^{2}(\Omega,\Lambda^{\ell+1})$ satisfy $dh=d^{*}g$.
From the Babu\v{s}ka--Aziz inequality we get the existence of $u\in H^{1}_{0}(\Omega,\Lambda^{\ell})$ such that 
\[
    d^{*}u= h\quad\mbox{ and }\quad 
   \|du\|^{2} = \Normc{u}{1} - \|d^{*}u\|^{2}
    \le (C_{\ell}(\Omega)-1)\,\|h\|^{2}\,.
\]
We find
\[
 \|h\|^{2}   = \langle h,d^{*}u\rangle 
               = \langle d h,u\rangle
               = \langle d^{*} g,u\rangle
               = \langle g,d u\rangle\,.
\]
With the Cauchy-Schwarz inequality and the estimate of $du$, we deduce
\[
   \|h\|^{2} \le \|g\| \,\|du\| \le \sqrt{C_{\ell}(\Omega)-1}\, \|g\| \,\|h\| \,,
\]
hence the estimate
\[
   \|h\|^{2}  \le (C_{\ell}(\Omega)-1) \, \|g\|^{2}\,,
\]
which proves \eqref{E:Gamma<=C-1}.

\medskip\noindent
{\em (ii)} In a second step, we assume that $\Omega$ is such that $\Gamma_{\ell}(\Omega)$ is finite. We will show that $C_{\ell}(\Omega)$ is finite and
\begin{equation}
\label{E:C<=Gamma+1}
   C_{\ell}(\Omega) \le \Gamma_{\ell}(\Omega)+1.
\end{equation}

Let $p\in M$ be given and define $u\in H^1_0(\Omega,\Lambda^{\ell})$ as the solution of
$\Delta u = dp$, that is $u$ is the unique solution of the variational problem
\begin{equation}
\label{E:u=w(p)}
  \forall v\in H^1_0(\Omega,\Lambda^{\ell}): \:
  \langle du,dv\rangle + \langle d^{*}u,d^{*}v\rangle = \langle p, d^{*}v\rangle
\,.
\end{equation}
We set $q=d^{*}u$ and $g=du$ and observe the following relations as consequences of \eqref{E:u=w(p)}:
\begin{alignat}{2}
\label{E:pqg}
 \langle p,q\rangle &= \Normc u1 &\,&=\DNormc q{} + \DNormc g{}\\
 \nonumber
 dp &= \Delta u && = d^{*}g +dq\\
\label{E:g2=(q,p-q)}
 \DNormc g{} &= \langle p,q\rangle -\DNormc q{} && = \langle p-q,q\rangle\,.
\end{alignat}
Noting that $q\in M$, we see that
 $h=p-q$ and $g$ are conjugate harmonic forms in the sense of \eqref{E:dh=d*g}. 
We can therefore use the Friedrichs--Velte inequality:
\begin{equation}
\label{E:Fri}
  \DNormc{p-q}{} \le \Gamma_{\ell}(\Omega)\, \DNormc g{}\,.
\end{equation}
Then we have with \eqref{E:g2=(q,p-q)} 
\[
  \DNormc g{} \le \DNorm q{} \DNorm{p-q}{} \le \DNorm q{} \sqrt{\Gamma_{\ell}(\Omega)}\DNorm g{}\,,
\]
hence
\begin{equation}
\label{E:Fri2}
 \DNormc g{} \le \Gamma_{\ell}(\Omega)\DNormc q{}\,.
\end{equation}
Now we estimate, using \eqref{E:pqg} and both \eqref{E:Fri} and \eqref{E:Fri2}:
\begin{align*}
 \DNormc p{} &= \DNormc{p-q}{} -\DNormc q{} + 2\langle p,q\rangle\\
   &= \DNormc{p-q}{} + \DNormc g{} + \DNormc q{} +\DNormc g{}\\
   &\le \Gamma_{\ell}(\Omega) \DNormc g{} + \Gamma_{\ell}(\Omega) \DNormc q{} + \DNormc q{} +\DNormc g{}\\
   &= \big(\Gamma_{\ell}(\Omega)+1\big)\, \Normc u{1} \,.
\end{align*}
We deduce 
$$
  \sup_{v\in H^{1}_{0}(\Omega,\Lambda^{\ell})} 
     \frac{\langle p,d^{*}v\rangle}{\Norm{v}{1}} \ge
  \frac{\langle p,d^{*}u\rangle }{\Norm u1}=\Norm u1 \ge 
    \frac1{\sqrt{\Gamma_{\ell}(\Omega)+1}}\, \|p\|\,.
$$
Therefore the inf-sup condition \eqref{E:infsup} is satisfied with 
$\beta\ge (\Gamma_{\ell}(\Omega)+1)^{-\frac12}$, and this gives the desired inequality \eqref{E:C<=Gamma+1}.

\smallskip
Theorem~\ref{T:BA-FrV} is proved. 
\end{proof}

The proof does not use any particular properties of the operators $d$ and $d^{*}$ except the formula \eqref{E:H1semi}. Therefore the Babu\v{s}ka--Aziz and Friedrich--Velte inequalities as well as the statement and proof of Theorem~\ref{T:BA-FrV} can be formulated in a more abstract setting that we will describe now. 
Comparing the situation of differential forms with the abstract setting will permit 
to clarify the role of the de Rham complex in this context. 

We need three Hilbert spaces $X$, $Y$, $Z$, where $Y$ and $Z$ are identified with their dual spaces, whereas $X$ is distinguished from its dual space $X'$, and two bounded linear operators
$$
  B:X\to Y\;,\qquad R:X\to Z.
$$
The object of interest is the operator $B$ and a possible a-priori estimate
\begin{equation}
\label{E:apB}
  \forall\, u\in \big(\ker B\big)^{\perp} \;:\; \|{u}\|_{X}^{2}\le C\, \|{Bu}\|_{Y}^{2}\,. 
\end{equation}
The validity of such an estimate is equivalent to the closedness of the range of $B$, and also to the analogous estimate for the dual operator $B^{*}$
$$
   \forall\, q\in \big(\ker B^{*}\big)^{\perp} \;:\; \|q\|_{Y}^{2}\le C\, \|{B^{*}q}\|_{X'}^{2}\,,
$$
as well as an inf-sup condition for $B$
$$
  \inf_{q\in (\ker B^{*})^{\perp\,}} \sup_{v\in X}
  \frac{\langle q, Bv\rangle}{\|q\|_{Y}\|v\|_{X}} = C^{-\frac12}>0\,.
$$
The smallest possible constant $C$ is the same in these three formulations and corresponds to the Babu\v{s}ka--Aziz constant.

The operator $R$ is an auxiliary operator that satisfies by assumption
\begin{equation}
\label{E:B*B+R*R}
 \forall\, v\in X\;:\; \|v\|_{X}^{2} = \|Bv\|_{Y}^{2} + \|Rv\|_{Z}^{2}\,.
\end{equation}
Such an operator $R$ will exist whenever $\|Bv\|_{Y}\le\|v\|_{X}$ holds in $X$.

We now call two elements $h\in Y$ and $g\in Z$ \emph{conjugate} if they satisfy
$B^{*}h=R^{*}g$, or equivalently
\begin{equation}
\label{E:B*h=R*g}
 \forall\, v\in X\;:\; \langle h,Bv\rangle = \langle g,Rv\rangle\,.
\end{equation}
The analog to the Friedrichs-Velte inequality is then:
\begin{equation}
\label{E:FVB}
 \mbox{ If $h\in Y$ and $g\in Z$ are conjugate and $h\in\big(\ker B^{*}\big)^{\perp}$, then }\; \|h\|_{Y}^{2} \le \Gamma \,\|g\|_{Z}^{2}\,. 
\end{equation}
Using the correspondence $B=d^{*}=\overline d^{*}$ defined in \eqref{E:dinL2} and $R=d=\underline d$ defined in \eqref{E:dinH10}, the preceding proof then immediately gives the following equivalence theorem.
\begin{theorem}
\label{T:BA=FrVforB}
The a-priori inequality \eqref{E:apB} is satisfied if and only if the inequality \eqref{E:FVB} between conjugate elements is satisfied, and for the smallest constants in the two inequalities there holds
$$
   C=\Gamma+1\,.
$$
\end{theorem}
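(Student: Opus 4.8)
The plan is to transcribe the proof of Theorem~\ref{T:BA-FrV} verbatim into the abstract language, using the dictionary $X=H^{1}_{0}(\Omega,\Lambda^{\ell})$, $Y=L^{2}(\Omega,\Lambda^{\ell-1})$, $Z=L^{2}(\Omega,\Lambda^{\ell+1})$, $B=\overline d^{*}$, $R=\underline d$, under which \eqref{E:B*B+R*R} is exactly \eqref{E:H1semi}, the conjugacy relation $B^{*}h=R^{*}g$ is \eqref{E:dh=d*g}, and $(\ker B^{*})^{\perp}$ plays the role of $M$. Since the excerpt already records that the a-priori estimate \eqref{E:apB}, the dual estimate for $B^{*}$, and the inf-sup condition hold with one and the same optimal constant $C$, I may pass freely between the solvability-with-bound form of Babu\v{s}ka--Aziz and \eqref{E:apB}. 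The only structural input will be \eqref{E:B*B+R*R}; in particular I will repeatedly use the unconditional inclusion $BX\subseteq(\ker B^{*})^{\perp}$, which is the abstract shadow of the statement ``$q=d^{*}u\in M$'' in the concrete proof.

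For the direction $\Gamma\le C-1$, I assume $C<\infty$ and take conjugate $h\in(\ker B^{*})^{\perp}$, $g\in Z$. The Babu\v{s}ka--Aziz inequality furnishes $u\in X$ with $Bu=h$ and $\|u\|_{X}^{2}\le C\|h\|_{Y}^{2}$; then \eqref{E:B*B+R*R} gives $\|Ru\|_{Z}^{2}=\|u\|_{X}^{2}-\|h\|_{Y}^{2}\le(C-1)\|h\|_{Y}^{2}$. Writing $\|h\|_{Y}^{2}=\langle h,Bu\rangle=\langle B^{*}h,u\rangle=\langle R^{*}g,u\rangle=\langle g,Ru\rangle$ and applying Cauchy--Schwarz as $\langle g,Ru\rangle\le\|g\|_{Z}\|Ru\|_{Z}$ yields $\|h\|_{Y}^{2}\le(C-1)\|g\|_{Z}^{2}$, i.e.\ \eqref{E:FVB} with $\Gamma\le C-1$.

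For the direction $C\le\Gamma+1$, I assume $\Gamma<\infty$ and, given $p\in(\ker B^{*})^{\perp}$, define $u\in X$ as the Riesz representative of the functional $v\mapsto\langle p,Bv\rangle$, that is, the solution of $\langle u,v\rangle_{X}=\langle p,Bv\rangle_{Y}$ for all $v$. This problem is well posed precisely because \eqref{E:B*B+R*R} identifies the left-hand bilinear form with the inner product of $X$. Setting $q=Bu$ and $g=Ru$, the defining identity expanded via \eqref{E:B*B+R*R} gives $\langle p,q\rangle=\|u\|_{X}^{2}=\|q\|_{Y}^{2}+\|g\|_{Z}^{2}$ and $\langle g,Rv\rangle=\langle p-q,Bv\rangle$ for all $v$, so that $h=p-q$ and $g$ are conjugate; moreover $q\in BX\subseteq(\ker B^{*})^{\perp}$, whence $p-q\in(\ker B^{*})^{\perp}$ and \eqref{E:FVB} applies. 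The Friedrichs--Velte bound $\|p-q\|_{Y}^{2}\le\Gamma\|g\|_{Z}^{2}$ together with $\|g\|_{Z}^{2}=\langle p-q,q\rangle$ and Cauchy--Schwarz bootstraps to $\|g\|_{Z}^{2}\le\Gamma\|q\|_{Y}^{2}$. Expanding $\|p\|_{Y}^{2}=\|p-q\|_{Y}^{2}-\|q\|_{Y}^{2}+2\langle p,q\rangle$ and inserting both bounds gives $\|p\|_{Y}^{2}\le(\Gamma+1)\|u\|_{X}^{2}$, and the chain
\[
 \sup_{v\in X}\frac{\langle p,Bv\rangle}{\|v\|_{X}}\ge\frac{\langle p,Bu\rangle}{\|u\|_{X}}=\frac{\langle p,q\rangle}{\|u\|_{X}}=\|u\|_{X}\ge(\Gamma+1)^{-1/2}\|p\|_{Y}
\]
delivers the inf-sup bound $\beta\ge(\Gamma+1)^{-1/2}$, hence $C\le\Gamma+1$.

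Because the arithmetic is identical to that of Theorem~\ref{T:BA-FrV}, I expect no genuine obstacle in the estimates themselves; the work lies entirely in the bookkeeping of the dictionary. The three points demanding care are: that the several equivalent characterizations of the Babu\v{s}ka--Aziz constant really share a single optimal $C$ (so that the solvability form used in the first direction and the inf-sup form produced in the second refer to the same quantity); that the variational problem in the second direction is solvable exactly by virtue of \eqref{E:B*B+R*R}; and that $BX\subseteq(\ker B^{*})^{\perp}$, which is what licenses applying \eqref{E:FVB} to $h=p-q$ and which replaces the de Rham-specific observation ``$q=d^{*}u\in M$''. Verifying these three points is precisely where one confirms that the concrete proof never used any property of $d$ and $d^{*}$ beyond \eqref{E:H1semi}.
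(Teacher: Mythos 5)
Your proposal is correct and coincides with the paper's approach: the paper proves Theorem~\ref{T:BA=FrVforB} precisely by declaring it a verbatim translation of the proof of Theorem~\ref{T:BA-FrV} under the dictionary $B=\overline d^{*}$, $R=\underline d$, and your transcription carries out exactly that translation. The three bookkeeping points you isolate (the shared optimal constant among the equivalent formulations of \eqref{E:apB}, solvability of the variational problem via \eqref{E:B*B+R*R}, and $BX\subseteq(\ker B^{*})^{\perp}$ as the abstract substitute for $q=d^{*}u\in M$) are indeed the only places where the dictionary must be checked, and you check them correctly.
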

The statement and proof of the abstract Theorem~\ref{T:BA=FrVforB} are verbatim translations of those of the result for the de Rham complex, Theorem~\ref{T:BA-FrV}, and we therefore do not repeat this proof.  
Where the concrete situation of operators $d$ and $d^{*}$ from the de Rham complex may provide further information is in the description of the space 
$M=(\ker B^{*}\big)^{\perp}$, see remarks at the end of Section~\ref{S:Ineq} and equation \eqref{E:MbyM0} in Section~\ref{S:Lip} below.


\section{Bounded Lipschitz domains}\label{S:Lip}

The classical Babu\v{s}ka--Aziz inequality in 2 and 3 dimensions  (which corresponds to $\ell=1$, see Section~\ref{S:Ex} below) has been known for a long time to be true for any bounded Lipschitz domain. Recently it has been shown to hold for the larger class of John domains \cite{AcostaDuranMuschietti2006}. While we do not yet know whether the generalization to other values of $\ell$ is also true for more general domains, we will show that it holds at least for Lipschitz domains. 

\begin{proposition}
\label{P:Lip}
Let the bounded open set $\Omega\subset\R^{n}$ be Lipschitz. Then for all $1\le\ell\le n-1$. the constants $C_{\ell}(\Omega)$ and $\Gamma_{\ell}(\Omega)$ are finite.
\end{proposition}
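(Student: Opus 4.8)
The plan is to use the equivalence of Theorem~\ref{T:BA-FrV} to reduce the proposition to a single statement: it suffices to prove that the Babu\v{s}ka--Aziz constant $C_{\ell}(\Omega)$ is finite, since \eqref{E:CGamma} then gives $\Gamma_{\ell}(\Omega)=C_{\ell}(\Omega)-1<\infty$ automatically. By the discussion of Section~\ref{S:Ineq}, the finiteness of $C_{\ell}(\Omega)$ is exactly the assertion that $d^{*}:H^{1}_{0}(\Omega,\Lambda^{\ell})\to M$ admits a bounded right inverse: for every $q\in M$ one must produce $v\in H^{1}_{0}(\Omega,\Lambda^{\ell})$ with $d^{*}v=q$ and $|v|_{1}\le C\,\|q\|$, with $C$ independent of $q$. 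Thus the whole proof amounts to constructing one such operator.

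The construction rests on the existence of bounded Bogovski\u{\i}-type homotopy operators for the de Rham complex on bounded Lipschitz domains (the regularized Poincar\'e and Bogovski\u{\i} operators of Costabel and McIntosh). Passing from $d$ to $d^{*}$ through the Hodge star, these furnish a bounded linear operator $R:L^{2}(\Omega,\Lambda^{\ell-1})\to H^{1}_{0}(\Omega,\Lambda^{\ell})$, whose image lies in $H^{1}_{0}$ precisely because the Bogovski\u{\i} construction preserves compact support, and which gains one Sobolev derivative, giving $|Rq|_{1}\le\kappa\,\|q\|$. These operators satisfy the homotopy identity
$$ d^{*}R + R\,d^{*} = \Id $$
on $L^{2}(\Omega,\Lambda^{\ell-1})$, up to a finite-rank smoothing correction that reflects the de Rham cohomology in the relevant degree.

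Granting this, the proof concludes in a few lines. For $q\in M$ we have $d^{*}q=0$ by \eqref{E:Minkerd*}, so the term $R\,d^{*}q$ drops out; moreover the orthogonality defining $M$ in \eqref{E:defM}, namely that $q$ is orthogonal to every closed form in $\ker\overline d$, annihilates the finite-dimensional cohomological correction, since the latter is detected by pairing against such closed forms. The homotopy identity then yields $d^{*}(Rq)=q$, and setting $v=Rq$ gives $d^{*}v=q$ together with $|v|_{1}\le\kappa\,\|q\|$. Hence $C_{\ell}(\Omega)\le\kappa^{2}<\infty$, and Theorem~\ref{T:BA-FrV} delivers $\Gamma_{\ell}(\Omega)=C_{\ell}(\Omega)-1<\infty$ as well.

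The main obstacle is the construction of the operator $R$ with its two decisive properties: that it maps into $H^{1}_{0}(\Omega,\Lambda^{\ell})$ (the homogeneous boundary condition, without which the produced $v$ would not lie in the space in which the Babu\v{s}ka--Aziz inequality is posed) and that it gains exactly one derivative, $L^{2}\to H^{1}$. Both properties hinge on the Lipschitz regularity of $\partial\Omega$: one covers $\Omega$ by finitely many pieces each star-shaped with respect to a ball, builds the singular-integral Bogovski\u{\i} operator on each piece, and patches the pieces by a partition of unity while tracking the commutator and cohomological terms. This covering-and-patching step, valid for Lipschitz boundaries but not for arbitrary bounded open sets, is exactly where the regularity hypothesis of the proposition is used; the remaining arguments are formal consequences of \eqref{E:Minkerd*} and Theorem~\ref{T:BA-FrV}.
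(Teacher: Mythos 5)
Your opening reduction --- invoke Theorem~\ref{T:BA-FrV} so that only $C_{\ell}(\Omega)<\infty$ needs proof, and read this as the existence of a bounded right inverse of $d^{*}:H^{1}_{0}(\Omega,\Lambda^{\ell})\to M$ --- is exactly how the paper begins. After that the routes diverge, and yours has a genuine gap. The paper never constructs a right inverse: by Lemma~\ref{L:imd*denseinM} the range of $d^{*}$ is dense in $M$, so surjectivity is equivalent to closedness of that range, which by the closed range theorem is equivalent to closedness of the range of the adjoint $\overline d:L^{2}(\Omega,\Lambda^{\ell-1})\to H^{-1}(\Omega,\Lambda^{\ell})$; that statement is precisely the $s=0$ case of the closed-range result proved in \cite{CostabelMcIntosh2010} for Lipschitz domains, and the quantitative bound in \eqref{E:BA} then comes for free from the open mapping theorem. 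Your proof instead rests on a global homotopy identity $d^{*}R+R\,d^{*}=\Id-K$ with $R:L^{2}(\Omega,\Lambda^{\ell-1})\to H^{1}_{0}(\Omega,\Lambda^{\ell})$ bounded and $K$ finite rank. Such an identity is established in \cite{CostabelMcIntosh2010} only for domains star-shaped with respect to a ball; for a general bounded Lipschitz domain their results are of closed-range/finite-dimensional-cohomology type, obtained by decomposition. The covering-and-patching you sketch does not deliver your identity: the partition-of-unity commutator terms (of the form $d\chi_{j}\wedge T_{j}(\cdot)$) are compact operators, not finite-rank ones, and upgrading ``identity modulo compact'' to closed range or to a finite-rank correction is exactly the Fredholm-type argument that constitutes the theorem the paper cites. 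So the sentence ``Granting this, the proof concludes in a few lines'' grants the entire difficulty.

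Two further steps are asserted rather than proved. First, $R\,d^{*}q=0$: the Bogovski\u{\i} scale acts on distributions supported in $\overline\Omega$, so what is needed is that $d^{*}$ of the extension of $q$ by zero vanishes on all of $\R^{n}$, i.e.\ $d^{*}q=0$ in $\Omega$ \emph{together with} a vanishing normal trace. This does hold for $q\in M$ --- restrictions to $\Omega$ of closed smooth forms on $\R^{n}$ lie in $\ker\overline d$, and $M=(\ker\overline d)^{\perp}$ --- but it does not follow from \eqref{E:Minkerd*} alone, which only gives $d^{*}q=0$ as a functional on $H^{1}_{0}(\Omega,\Lambda^{\ell-2})$. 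Second, $Kq=0$ for $q\in M$: the claim that the cohomological correction ``is detected by pairing against closed forms'' is a Poincar\'e--Lefschetz duality statement; once closed range is known, it is equivalent to the very surjectivity you are proving (namely that $q\in M$ forces $q\in d^{*}H^{1}_{0}(\Omega,\Lambda^{\ell})$), so it cannot simply be invoked. Your strategy is sound for star-shaped domains, where the Costabel--McIntosh operators give it verbatim with an explicit constant, and it could be completed for Lipschitz domains by running their Mayer--Vietoris induction; but as written the decisive lemma is assumed rather than available, whereas the paper's soft duality argument is designed precisely to avoid needing it.
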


\begin{proof}
In view of the equivalence theorem~\ref{T:BA-FrV}, it is sufficient to prove the finiteness of the Babu\v{s}ka--Aziz constant $C_{\ell}(\Omega)$. This is equivalent to the surjectivity of $d^{*}$ from $H^{1}_{0}(\Omega,\Lambda^{\ell})$ to $M$, because the existence of the estimate in \eqref{E:BA} is then a consequence of the Banach open mapping theorem. In view of Lemma~\ref{L:imd*denseinM}, we only need to show that the image of $d^{*}$ is a closed subspace of $L^{2}(\Omega,\Lambda^{\ell-1})$, or equivalently, that $d$ from $L^{2}(\Omega,\Lambda^{\ell-1})$ to $H^{-1}(\Omega,\Lambda^{\ell})$ has closed range. This in turn is a consequence of the results of \cite{CostabelMcIntosh2010}, where it is shown that for any $s\in\R$, the range of
$$
 d: H^{s}(\Omega,\Lambda^{\ell-1}) \to H^{s-1}(\Omega,\Lambda^{\ell})
$$
is closed. For $s=0$, this is the desired result. 
\end{proof}

On a Lipschitz domain, the space $M$ can be characterized in more detail. 
The range $dH^{1}(\Omega,\Lambda^{\ell-2})$ has a finite-dimensional complement 
${\mathscr H}_{\ell-1}(\Omega)$
in the kernel of 
$$
 d:L^{2}(\Omega,\Lambda^{\ell-1})\to H^{-1}(\Omega,\Lambda^{\ell})\,,
$$ which can be chosen as the $L^{2}$-orthogonal complement. 
The orthogonal complement $M_{0}$ of $dH^{1}(\Omega,\Lambda^{\ell-2})$ in $L^{2}(\Omega,\Lambda^{\ell-1})$ is the kernel of 
$$
 d^{*}: L^{2}(\Omega,\Lambda^{\ell-1}) \to \big(H^{1}(\Omega,\Lambda^{\ell-2})\big)'
$$
which corresponds to the differential equation with normal boundary condition
$$
  M_{0} = \{v\in L^{2}(\Omega,\Lambda^{\ell-1}) \mid d^{*}v=0 \mbox{ in }\Omega;
    n\mathbin\lrcorner v=0\mbox{ on } \partial \Omega \}\,.
$$
Finally, it is not hard to see that the elements of $M$, in addition to satisfying this boundary value problem, are orthogonal to harmonic forms, so that we get the following equality.
\begin{equation}
\label{E:MbyM0}
  M = M_{0}\cap {\mathscr H}_{\ell-1}(\Omega)^{\perp}\,.
\end{equation}


\section{Examples}\label{S:Ex}

\subsection{The inf-sup constant for the divergence} 
This is the case $\ell=1$. 
We identify $0$-forms with scalars, $1$-forms with vectors of dimension $n$, and $2$-forms with $2$-vectors or antisymmetric matrices, of dimension $n(n-1)/2$. With this identification, the operators in \eqref{E:dinL2} are realized as
$$
  \overline d=\grad\,,\quad \overline d^{*}=-\div\,.
$$
In \eqref{E:dinH10}, the identification is
$$
  \underline d=\curl\,,\quad \underline d^{*}=\curl^{*}\,,
$$
where $\curl^{*}$ is the adjoint of the curl operator. 
In two dimensions, $\curl$ is the scalar curl and $\curl^{*}$ the vector curl, whereas in three dimensions, both $\curl$ and $\curl^{*}$ are represented by the standard curl operator.

The space $M$ is the orthogonal complement in $L^{2}(\Omega)$ of the kernel of the gradient, which consists of locally constant functions. Thus $M$ is the space of functions of vanishing integral over each connected component of $\Omega$; in the case of a connected open set $\Omega$, it is the space $L^{2}_{\circ}(\Omega)$ of functions of mean value zero.

This is the classical situation of the Babu\v{s}ka--Aziz inequality and of the inf-sup condition
$$
  \inf_{q\in L^{2}_{\circ}(\Omega)} \sup_{v\in H^{1}_{0}(\Omega)^{n}}
  \frac{\langle q,\div v\rangle}{\|q\| \Norm{v}{1}} = \beta>0, \;
  \beta=C_{1}(\Omega)^{-\frac12}\,,
$$
 the validity of which for bounded Lipschitz domains in any dimension has been known since the work of Bogovski\u{\i} \cite{Bogovskii79}.
 
In this situation, the Friedrichs-Velte inequality \eqref{E:FrV} corresponds to the following:

\textbf{In two dimensions}, \eqref{E:dh=d*g} is the Cauchy-Riemann system and \eqref{E:FrV} can be written as:
$$
 \forall\,  h\in L^{2}_{\circ}(\Omega), \, g\in L^{2}(\Omega)
 \mbox{ such that $h+ig$ is holomorphic : }\quad 
 \|h\|^{2} \le\,\Gamma_{1}(\Omega)\,\|g\|^{2}\,.
$$
This is the inequality studied by Friedrichs \cite{Friedrichs1937}. Its equivalence with the Babu\v{s}ka--Aziz inequality was shown for $C^{2}$ domains by Horgan--Payne \cite{HorganPayne1983} and without regularity assumptions in \cite{CoDa_IBAFHP}. Horgan--Payne applied this equivalence together with an estimate for the Friedrichs constant to get an upper bound for the Babu\v{s}ka--Aziz constant for star-shaped domains in two dimensions.

\textbf{In three dimensions}, \eqref{E:FrV} can be written as:
$$
 \forall\,  h\in L^{2}_{\circ}(\Omega), \, \boldsymbol{g}\in L^{2}(\Omega)^{3}
 \mbox{ such that $\grad h = \curl \boldsymbol{g}$ : }\quad 
 \|h\|^{2} \le\,\Gamma_{1}(\Omega)\,\|\boldsymbol{g}\|^{2}\,.
$$
This inequality was shown by Velte \cite{Velte98} and its equivalence with the Babu\v{s}ka--Aziz inequality was proved for $C^{2}$ domains. Zsupp\'an \cite{Zsuppan2013} found that our proof for the two-dimensional case in \cite{CoDa_IBAFHP} can be generalized to prove the equivalence without regularity assumptions. In \cite{Payne2007}, Payne proved an estimate of the Velte constant $\Gamma_{1}(\Omega)$ for star-shaped domains and used Velte's equivalence result to get an upper bound for the Babu\v{s}ka--Aziz constant $C_{1}(\Omega)$ in three dimensions.

\subsection{The inf-sup constant for the curl} \label{infsupcurl}
Let us consider the case $n=3, \ell=2$. In this case, $\ell-1$ forms and $\ell$ forms are represented by vector functions and $\ell+1$ forms are scalar. 
The operators in \eqref{E:dinL2} are realized as
$$
  \overline d=\curl\,,\quad \overline d^{*}=\curl\,.
$$
In \eqref{E:dinH10}, the identification is
$$
  \underline d=\div\,,\quad \underline d^{*}=-\grad\,,
$$
The space $M\subset L^{2}(\Omega)^{3}$ is now the orthogonal complement of the kernel of $\curl$. If the domain $\Omega$ is \emph{simply connected}, then this kernel consists of the gradients of $H^{1}(\Omega)$ functions. Orthogonality to these gradients is the variational description of the space
$$
  M= H_{0}(\div0,\Omega)=\{v\in L^{2}(\Omega)^{3} \mid
    \div v=0 \mbox{ in }\Omega; \;
    {n}\cdot v=0 \mbox{ on }\partial\Omega\}\,.
$$ 
This is the setting in which Velte \cite{Velte98} formulated his second version of the Friedrichs inequality
$$
  \forall\, h\in H_{0}(\div0,\Omega), g\in L^{2}(\Omega)^{3}:
  \curl h = \grad g \Longrightarrow \|h\|^{2} \le \Gamma_{2}(\Omega) \|g\|^{2}
$$
 and proved its equivalence with the a-priori estimates for the curl in $H^{1}_{0}(\Omega)^{3}$
$$
  \forall\, v\in (\ker \curl)^{\perp} \;:\; \Norm{v}1^{2}\le C_{2}(\Omega)\,\DNorm{\curl v}{}^{2}\;\;
$$
and in $L^{2}(\Omega)^{3}$
$$
  \forall\, v\in H_{0}(\div0,\Omega) \;:\; \DNorm{v}{}^{2}\le C_{2}(\Omega)\,|\curl v|_{-1}^{2}\,.
$$
This is equivalent to the inf-sup condition
$$
  \inf_{q\in H_{0}(\div0,\Omega)} \sup_{v\in H^{1}_{0}(\Omega)^{3}}
  \frac{\langle q,\curl v\rangle}{\|q\| \Norm{v}1} = C_{2}(\Omega)^{-\frac12}\,.
$$
If $\Omega$ is not simply connected, then there exists a finite-dimensional cohomology space 
$${\mathscr H}_{1}(\Omega)=H_{0}(\div0,\Omega)\cap \ker\curl$$
so that in the above inequalities $H_{0}(\div0,\Omega)$ has to be replaced by the smaller space
$$
  M= H_{0}(\div0,\Omega) \cap {\mathscr H}_{1}(\Omega)^{\perp}\,.
$$



\def\cprime{$'$}

\end{document}